\theoremstyle{definition}
\newtheorem{Def}{Definition}[section]
\theoremstyle{plain}
\newtheorem{Thm}[Def]{Theorem}
\newtheorem*{Dumas}{Dumas' Theorem}
\newtheorem{Lem}[Def]{Lemma}
\newtheorem{Prop}[Def]{Proposition}
\newtheorem{Cor}[Def]{Corollary}
\newtheorem{Conj}[Def]{Conjecture}
\numberwithin{equation}{section}
\newcommand{\Mod}{\text{\rm mod~}}
\title{
	A note on an irreducible class of polynomials over integers\thanks{This work is supported by the National Natural Science Foundation of China(Grant No. 12171163).}
}
\author{
	Weilin Zhang, Pingzhi Yuan\thanks{Corresponding author: yuanpz@scnu.edu.cn}\\
	{\small School of Mathematical Sciences, South China Normal University,}\\
	{\small Guangzhou, 510631, P. R. China}
}
\date{}
\begin{document}

\maketitle

\addtolength{\baselineskip}{+0.6mm}
{\bf Abstract}: In this note, we prove an irreducibility criterion for the polynomial of the form $f(x) = a_{n}x^{n} + a_{n-1}x^{n-1} + \cdots + a_{m}x^{m} + p^{u} \in \mathbb{Z}[x]$, where $p$ is a prime number, $u \geqslant 1$, $\gcd(u, m) = 1$, $p \nmid a_{m}$ and $p^{u} > |a_{n}| + |a_{n-1}| + \cdots + |a_{m}|$. In particular, we show that the conjecture of Koley and Reddy is true.

{\bf Keywords}: irreducible polynomials; Newton polygon

{\bf Mathematics Subject Classification 2020}: 11R09, 11C08

\baselineskip=0.30in
\section{Introduction}

Many researchers studied the irreducibility of polynomials in $\mathbb{Z}[x]$ having a constant term as a prime number or prime powers. For instance, Weisner \cite{W1934} proved that if $p$ is a prime number and $n \geqslant 2$, $u \geqslant 1$, then $x^{n}\pm x \pm p^{u}$ is irreducible whenever $p^{u} > 2$. Jonassen \cite{J1967} proved that $x^{n}\pm x^{m} \pm 4$ is irreducible except for six distinct families of polynomails. Recently, Koley and Reddy \cite{KR2023} proved that apart from cyclotomic factors, $x^{n}\pm x \pm 2$ has exactly one non-reciprocal irreducible factor.

For a more general case on this problem, Panitopol and Stef\"{a}nescu \cite{PS1985} proved the following Proposition \ref{Prop1}.

\begin{Prop} \label{Prop1}
	Let $f(x) = a_{n}x^{n} + a_{n-1}x^{n-1} + \cdots + a_{1}x + p \in \mathbb{Z}[x]$, where $p$ is a prime number. If  $p > |a_{n}| + |a_{n-1}| + \cdots + |a_{1}|$, then $f(x)$ is irreducible over $\mathbb{Q}$.
\end{Prop}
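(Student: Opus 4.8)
The plan is to argue by contradiction, combining a bound on the complex roots of $f$ with the fact that the constant term $p$ is prime. The decisive observation is that the hypothesis $p > |a_{n}| + |a_{n-1}| + \cdots + |a_{1}|$ forces every root of $f$ to lie strictly outside the closed unit disc, after which a nontrivial factorization becomes impossible because a prime cannot split into two integer factors each of modulus exceeding $1$.

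First I would establish the root bound. Let $\alpha \in \mathbb{C}$ with $f(\alpha) = 0$. Rearranging $f(\alpha) = 0$ gives $p = -\left(a_{n}\alpha^{n} + \cdots + a_{1}\alpha\right)$, so taking absolute values yields
\[
p = \left| a_{n}\alpha^{n} + \cdots + a_{1}\alpha \right| \leqslant |a_{n}|\,|\alpha|^{n} + \cdots + |a_{1}|\,|\alpha|.
\]
If $|\alpha| \leqslant 1$, then $|\alpha|^{j} \leqslant 1$ for every $j \geqslant 1$, so the right-hand side is at most $|a_{n}| + \cdots + |a_{1}| < p$, a contradiction. Hence every root of $f$ satisfies $|\alpha| > 1$. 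Note also that the hypothesis gives $|a_{n}| < p$, so $p \nmid a_{n}$ and $f$ is primitive.

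Next I would suppose, for contradiction, that $f$ is reducible over $\mathbb{Q}$. By Gauss's lemma and primitivity this gives a factorization $f = gh$ with $g, h \in \mathbb{Z}[x]$ both of positive degree. Comparing constant terms yields $g(0)h(0) = p$; since $p$ is prime and $g(0), h(0) \in \mathbb{Z}$, one factor, say $h(0)$, satisfies $|h(0)| = 1$. Writing $h(x) = c\prod_{i}(x - \beta_{i})$ with leading coefficient $c \in \mathbb{Z}\setminus\{0\}$ and $\deg h \geqslant 1$, each $\beta_{i}$ is a root of $f$, hence $|\beta_{i}| > 1$ by the first step. Therefore $|h(0)| = |c|\prod_{i}|\beta_{i}| > 1$, contradicting $|h(0)| = 1$, and the proof is complete.

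The only real subtlety lies in the first step: one must be careful that the estimate $|\alpha|^{j} \leqslant 1$ (rather than a comparison against the leading term alone) is exactly what the hypothesis is tailored to, and that the \emph{strict} inequality $p > \sum_{j}|a_{j}|$ is genuinely used, since it is what rules out roots on the unit circle as well as inside it. The second step is then essentially bookkeeping with Vieta's formulas, the crux being that a nonzero integer constant term assembled from an integer leading coefficient and roots all of modulus greater than $1$ can never equal $\pm 1$. Everything here is elementary; the heart of the matter is recognizing that a prime constant term is incompatible with all roots lying outside the unit disc.
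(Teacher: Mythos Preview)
Your proof is correct. The paper does not itself prove Proposition~\ref{Prop1} (it is quoted from \cite{PS1985}), but your argument is exactly the specialization to $u=m=1$ of the paper's proof of Theorem~\ref{Thm}: the root bound is the paper's Lemma~\ref{Lem}, and the constant-term/Vieta contradiction is identical, the Newton-polygon step being unnecessary here since $b_{0}c_{0}=p$ already forces one factor's constant term to be $\pm 1$.
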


Note that $f(x)$ and $-f(x)$ have the same irreducibility. For the sake of simplicity of presentation, we always suppose that $f(x)$ has a positive constant term. A. I. Bonciocat and N. C. Bonciocat extended Proposition \ref{Prop1} to prime powers in \cite{BB2009} and \cite{BB2006}. In \cite{BB2009}, they proved that if $p\nmid a_{0}a_{1}$, $p^{u} > |a_{1}|p^{2e} + \sum_{i=2}^{n}|a_{0}^{i-1}a_{i}|p^{ie}$, $u \geqslant 1$, $e \geqslant 0$, then $a_{n}x^{n} + \cdots + a_{2}x^{2} + a_{1}p^{e}x + a_{0}p^{u}$ is irreducible over $\mathbb{Q}$. Setting $e = 0$ and $a_{0} = 1$, we get the following Proposition \ref{Prop2}.

\begin{Prop} \label{Prop2}
	Let $f(x) = a_{n}x^{n} + a_{n-1}x^{n-1} + \cdots + a_{1}x + p^{u} \in \mathbb{Z}[x]$, where $p$ is a prime number and $u \geqslant 1$. If $p \nmid a_{1}$ and $p^{u} > |a_{n}| + |a_{n-1}| + \cdots + |a_{1}|$, then $f(x)$ is irreducible over $\mathbb{Q}$.
\end{Prop}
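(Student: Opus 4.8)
The plan is to argue by contradiction, combining a lower bound on the modulus of every complex root of $f$ with the multiplicativity of constant terms under factorization. First I would invoke Gauss's lemma to reduce a hypothetical factorization over $\mathbb{Q}$ to one over $\mathbb{Z}$: suppose $f = gh$ with $g, h \in \mathbb{Z}[x]$ both nonconstant, and write $g(x) = b_{r}x^{r} + \cdots + b_{0}$ and $h(x) = c_{s}x^{s} + \cdots + c_{0}$ with $r, s \geq 1$ and $r + s = n$.

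The first key step is to show that every complex root $\alpha$ of $f$ satisfies $|\alpha| > 1$. Indeed, if $|\alpha| \leq 1$, then from $p^{u} = -\sum_{i=1}^{n} a_{i}\alpha^{i}$ together with $|\alpha|^{i} \leq |\alpha| \leq 1$ for $i \geq 1$ we would obtain $p^{u} \leq |\alpha| \sum_{i=1}^{n} |a_{i}| \leq \sum_{i=1}^{n} |a_{i}| < p^{u}$, a contradiction; here the strict final inequality is exactly the hypothesis $p^{u} > |a_{n}| + \cdots + |a_{1}|$. Hence all roots lie strictly outside the closed unit disk.

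Next I would read off the constant terms. Since $f(0) = b_{0}c_{0} = p^{u}$ and $p$ is prime, we must have $b_{0} = \pm p^{a}$ and $c_{0} = \pm p^{b}$ with $a + b = u$ and $a, b \geq 0$. The absolute value of the product of the roots of $g$ equals $|b_{0}/b_{r}|$, and since each such root has modulus greater than $1$, we get $|b_{0}| > |b_{r}| \geq 1$, so $|b_{0}| > 1$ and therefore $a \geq 1$; by the symmetric argument applied to $h$ we also get $b \geq 1$. Thus $p \mid b_{0}$ and $p \mid c_{0}$ simultaneously.

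Finally, comparing the coefficient of $x$ on both sides of $f = gh$ yields $a_{1} = b_{0}c_{1} + b_{1}c_{0}$, which is divisible by $p$ because $p$ divides both $b_{0}$ and $c_{0}$. This contradicts the hypothesis $p \nmid a_{1}$, so no nontrivial factorization can exist and $f$ is irreducible over $\mathbb{Q}$. The step I would regard as the crux is establishing that \emph{both} constant terms, not merely one, are divisible by $p$: this is precisely where the root bound $|\alpha| > 1$ is indispensable, since it forces each factor's constant term to strictly exceed its leading coefficient in modulus, thereby ruling out a factor whose constant term is $\pm 1$.
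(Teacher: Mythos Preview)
Your proof is correct and takes a genuinely different route from the paper. The paper does not prove Proposition~\ref{Prop2} separately but subsumes it under Theorem~\ref{Thm} (the case $m=1$), and the argument there runs in the reverse order from yours: it first uses the Newton polygon of $f$ with respect to $p$ together with Dumas' theorem to rule out having $p\mid b_{0}$ and $p\mid c_{0}$ simultaneously, concludes that one factor must have constant term $\pm 1$, and only then invokes the root bound (Lemma~\ref{Lem}) to reach a contradiction. You instead apply the root bound first to force $|b_{0}|,|c_{0}|>1$, so that $p$ divides both constant terms, and finish by reading off $p\mid a_{1}$ from the identity $a_{1}=b_{0}c_{1}+b_{1}c_{0}$. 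Your approach is more elementary---it avoids Newton polygons entirely---but it is tailored to $m=1$: the two-term coefficient identity for $a_{1}$ has no equally clean analogue for $a_{m}$ when $m>1$, which is precisely why the paper's general argument needs the Newton-polygon machinery and the hypothesis $\gcd(u,m)=1$.
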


It is worth mentioning that the condition $p\nmid a_{1}$ in Proposition \ref{Prop2} cannot be removed. For example, $x^{3} - x^{2} - 10x + 16 = (x-2)(x^{2} + x -8)$.

In \cite{BB2006}, they proved that if $p\nmid a_{0}a_{2}$, $p^{u} > |a_{0}a_{2}|p^{3e} + \sum_{i=3}^{n}|a_{0}^{i-1}a_{i}|p^{ie}$ and $u \not\equiv e ~(\Mod 2)$, $u \geqslant 1$, $e \geqslant 0$, then $a_{n}x^{n} + \cdots + a_{3}x^{3} + a_{2}p^{e}x^{2} + a_{0}p^{u}$ is irreducible over $\mathbb{Q}$. Setting $e = 0$ and $a_{0} = 1$, we get the following Proposition \ref{Prop3}.

\begin{Prop} \label{Prop3}
	Let $f(x) = a_{n}x^{n} + a_{n-1}x^{n-1} + \cdots + a_{2}x^{2} + p^{u} \in \mathbb{Z}[x]$, where $p$ is a prime number and $u \geqslant 1$. If $p \nmid a_{2}$, $2 \nmid u$ and $p^{u} > |a_{n}| + |a_{n-1}| + \cdots + |a_{1}|$, then $f(x)$ is irreducible over $\mathbb{Q}$.
\end{Prop}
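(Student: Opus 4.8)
The plan is to argue by contradiction, combining an archimedean estimate on the moduli of the complex roots of $f$ with a $p$-adic comparison of its two lowest nonconstant coefficients. First I would invoke Gauss's lemma to turn an assumed rational factorization into an integer one, writing $f = gh$ with $g,h \in \mathbb{Z}[x]$ and $\deg g, \deg h \geq 1$. Writing $g(x) = \sum_i g_i x^i$ and $h(x) = \sum_j h_j x^j$, comparison of constant terms gives $g_0 h_0 = f(0) = p^u$, so each of $g_0, h_0$ is $\pm$ a power of $p$; set $a = v_p(g_0)$ and $b = v_p(h_0)$, so that $a,b \geq 0$ and $a + b = u$.

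The first substantive step is the archimedean bound. If $\alpha \in \mathbb{C}$ is any root of $f$, then $p^u = |a_n\alpha^n + \cdots + a_2\alpha^2|$, and for $|\alpha| \leq 1$ one has $|\alpha|^j \leq |\alpha|^2 \leq 1$ for every $j \geq 2$, whence
\[
p^u \leq |\alpha|^2 \sum_{j=2}^{n} |a_j| \leq \sum_{j=1}^{n} |a_j| < p^u,
\]
a contradiction. Hence every root of $f$ has modulus strictly greater than $1$. Since $|g_0|$ equals the absolute value of the leading coefficient of $g$ times the product of the moduli of its roots, each of which exceeds $1$, and $g$ has at least one root, we get $|g_0| > 1$; as $g_0$ is a power of $p$ this forces $a \geq 1$, and likewise $b \geq 1$.

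The decisive step is then a $p$-adic reading of the two lowest nonconstant coefficients. Comparing coefficients of $x^2$ gives $g_2 h_0 + g_1 h_1 + g_0 h_2 = a_2$; reducing modulo $p$ and using $p \mid g_0$, $p \mid h_0$ together with $p \nmid a_2$ yields $g_1 h_1 \equiv a_2 \not\equiv 0 \pmod p$, so $p \nmid g_1$ and $p \nmid h_1$. Comparing coefficients of $x^1$, which is $0$ in $f$, gives $g_1 h_0 + g_0 h_1 = 0$, i.e.\ $g_1 h_0 = -g_0 h_1$; taking $p$-adic valuations and using $v_p(g_1) = v_p(h_1) = 0$ gives $b = a$. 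But then $u = a + b = 2a$ is even, contradicting $2 \nmid u$, and so $f$ is irreducible.

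I expect the main obstacle to be arranging for the archimedean step to deliver exactly $a,b \geq 1$: without it the congruence mod $p$ at $x^2$ collapses, since a factor with $v_p(g_0)=0$ would no longer kill the cross terms $g_2 h_0$ and $g_0 h_2$. This is where the hypotheses conspire, namely $p^u > \sum |a_j|$ pushes all roots outside the unit disk so that both constant terms are divisible by $p$, the condition $p \nmid a_2$ isolates $g_1 h_1$ modulo $p$, and the vanishing linear term forces $v_p(g_0) = v_p(h_0)$ — a balance that the parity condition $2 \nmid u$ then forbids. Conceptually this reproduces by hand the fact that the Newton polygon edge from $(0,u)$ to $(2,0)$ has primitive slope $-u/2$ and hence cannot be split between two factors.
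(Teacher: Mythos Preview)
Your argument is correct. The paper does not prove Proposition~\ref{Prop3} directly (it is quoted from \cite{BB2006}), but the paper's own Theorem~\ref{Thm} covers it as the case $m=2$, and the proof there runs in the \emph{opposite} direction from yours: it first uses the Newton polygon of $f$ with respect to $p$ together with Dumas' Theorem to force one of the two constant terms to have $p$-adic valuation $0$, and only then invokes the archimedean bound (all roots lie outside the unit circle) to reach a contradiction via $\prod |z_i| = |b_0|/|b_s| \leq 1$.

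You reverse the roles: the archimedean bound comes first and forces $v_p(g_0), v_p(h_0) \geq 1$, after which a bare-hands mod-$p$ reading of the $x^1$ and $x^2$ coefficients yields $v_p(g_0)=v_p(h_0)$ and hence $u$ even. This is genuinely more elementary---no Dumas, no Newton polygon machinery---and it is essentially the style of the original Bonciocat--Bonciocat argument. The trade-off is that your coefficient comparison is tailored to $m=2$: the single vanishing linear coefficient gives exactly the equation $g_1 h_0 = -g_0 h_1$ needed to balance the valuations. The paper's Newton-polygon route, by contrast, handles all $m$ with $\gcd(u,m)=1$ uniformly, since the indivisibility of the segment from $(0,u)$ to $(m,0)$ replaces the ad hoc coefficient chase. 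Your final remark already anticipates this: your proof is the $m=2$ unwinding of that Newton-polygon fact.
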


As with Proposition \ref{Prop2}, the conditions $p\nmid a_{2}$ and $2\nmid u$ in Proposition \ref{Prop3} also cannot be removed. For example,
\begin{equation*}
	2x^{3} - 3x^{2} - 27 = (x-3)(2x^{2} + 3x + 9),
\end{equation*}
\begin{equation*}
	x^{4} + (2^{k+1}-1)x^{2} + 2^{2k} = (x^{2} + x + 2^{k})(x^{2} - x + 2^{k}), \text{~where~} k\geqslant 1.
\end{equation*}

Recently, Koley and Reddy tried to find the similar irreducibility criterion for the polynomial of the form $f(x) = a_{n}x^{n} + a_{n-1}x^{n-1} + \cdots + a_{3}x^{3} + p^{u} \in \mathbb{Z}[x]$, where $p$ is a prime number and $u \geqslant 1$. In \cite{KR2022}, they cleverly used the technique of reciprocal polynomials to porve the following Proposition \ref{Prop4}.

\begin{Prop} \label{Prop4}
	Let $f(x) = a_{n}x^{n} + a_{n-1}x^{n-1} + \cdots + a_{3}x^{3} + p^{u} \in \mathbb{Z}[x]$, where $p$ is a prime number and $u \geqslant 1$. If $p\nmid a_{n}a_{3}$, $3 \nmid u$ and $p^{u} > |a_{n}| + |a_{n-1}| + \cdots + |a_{3}|$, then $f(x)$ is irreducible over $\mathbb{Q}$.
\end{Prop}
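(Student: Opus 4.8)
The plan is to argue by contradiction, combining $p$-adic (Newton polygon) with archimedean information about the roots of $f$. By Gauss's lemma it suffices to rule out a factorization $f = gh$ with $g,h \in \mathbb{Z}[x]$ both of positive degree. First I would determine the $p$-adic Newton polygon of $f$. Since $p \nmid a_3$ and $p \nmid a_n$, the relevant lattice points are $(0,u)$ (from the constant term $p^u$), $(3,0)$, $(n,0)$, and the points $(i,v_p(a_i))$ for $3 < i < n$, all at height $\ge 0$. Hence the lower hull has exactly two edges: one of slope $-u/3$ from $(0,u)$ to $(3,0)$, and one of slope $0$ from $(3,0)$ to $(n,0)$. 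Consequently, counted with multiplicity in $\overline{\mathbb{Q}}_p$, the polynomial $f$ has exactly three roots of valuation $u/3$ and $n-3$ roots of valuation $0$.

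Next I would exploit $3 \nmid u$. In the factorization $f = gh$, let $r$ denote the number of valuation-$u/3$ roots of $f$ lying among the roots of $g$. Because $p \nmid a_n$, the leading coefficients of $g$ and $h$ are prime to $p$, so $v_p(g(0))$ equals the sum of the valuations of the roots of $g$, namely $ru/3$. Since this is a nonnegative integer and $\gcd(u,3)=1$, we must have $3 \mid r$, hence $r \in \{0,3\}$. Thus all three valuation-$u/3$ roots lie in a single factor, and the other factor — say $h$ — satisfies $v_p(h(0)) = 0$; together with $g(0)h(0) = p^u$ this forces $|h(0)| = 1$.

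Finally I would invoke the hypothesis $p^u > |a_n| + \cdots + |a_3|$ to reach a contradiction. If $\alpha$ were a root of $f$ with $|\alpha| \le 1$, then $\sum_{i=3}^{n} a_i\alpha^i = -p^u$ would give $p^u \le \sum_{i=3}^{n} |a_i|\,|\alpha|^i \le \sum_{i=3}^{n} |a_i| < p^u$, which is absurd; hence every root of $f$ has absolute value $> 1$. As the roots of $h$ are among those of $f$ and $|\mathrm{lc}(h)| \ge 1$, a nonconstant $h$ would satisfy $|h(0)| = |\mathrm{lc}(h)| \prod_\beta |\beta| > 1$, contradicting $|h(0)| = 1$. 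Therefore $h$ is constant, and $f$ is irreducible over $\mathbb{Q}$.

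I expect the crux to be the interplay between the two places: the Newton polygon together with $p \nmid a_3 a_n$ and $3 \nmid u$ only shows that any factor not absorbing the slope-$(-u/3)$ edge must have unit constant term, and it is the archimedean size bound that then collapses such a factor to a constant. The details needing the most care are the bookkeeping of constant-term valuations — in particular justifying $v_p(g(0)) = ru/3$ via $p \nmid a_n$ — and checking that the lower hull genuinely has no intermediate vertex between $(0,u)$ and $(3,0)$ or along the base. This yields a self-contained alternative to the reciprocal-polynomial argument of Koley and Reddy.
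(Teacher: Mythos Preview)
Your argument is correct and is essentially the same as the paper's own proof of Theorem~\ref{Thm} (of which Proposition~\ref{Prop4} is the special case $m=3$): use the Newton polygon of $f$ at $p$ together with $\gcd(u,3)=1$ to force one factor of a putative factorization to have constant term $\pm 1$, then obtain a contradiction from the archimedean bound that places every root of $f$ strictly outside the unit circle. The only minor difference is that you phrase the $p$-adic step via root valuations and invoke $p\nmid a_n$ to make the leading coefficients $p$-units, whereas the paper applies Dumas' Theorem to the single indivisible negative-slope edge directly and so never needs $p\nmid a_n$; this is precisely how the paper relaxes the hypothesis from $p\nmid a_n a_3$ to $p\nmid a_3$.
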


From the proof of Proposition \ref{Prop4} in \cite{KR2022}, the condition $p\nmid a_{n}a_{3}$ cannot be replaced with $p\nmid a_{3}$. In this note, we will show that this replacement is feasible.

According to Propositions \ref{Prop2}--\ref{Prop4} and some examples in \cite{KR2022}, the authors proposed the following Conjecture \ref{Conj}.

\begin{Conj} \label{Conj}
	Let $f(x) = a_{n}x^{n} + a_{n-1}x^{n-1} + \cdots + a_{q}x^{q} + p^{u} \in \mathbb{Z}[x]$, where $p$ and $q$ are two prime numbers and $u \geqslant 1$. If $p\nmid a_{n}a_{q}$, $q \nmid u$ and $p^{u} > |a_{n}| + |a_{n-1}| + \cdots + |a_{q}|$, then $f(x)$ is irreducible over $\mathbb{Q}$.
\end{Conj}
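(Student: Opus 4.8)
The plan is to combine two complementary estimates: a $p$-adic analysis via the Newton polygon that pins down the $p$-adic valuations of all the roots, and an archimedean estimate that forces every root outside the closed unit disc. Since $p \nmid a_n$ while $p \mid p^u$, the polynomial $f$ is primitive, so by Gauss's lemma it suffices to exclude a factorization $f = gh$ with $g, h \in \mathbb{Z}[x]$ both of positive degree. I would argue by contradiction from such a factorization.

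First I would record the archimedean bound. Writing $f(x) = x^q\bigl(a_n x^{n-q} + \cdots + a_q\bigr) + p^u$, for every complex $x$ with $|x| \le 1$ one has $|f(x) - p^u| \le |a_n| + \cdots + |a_q| < p^u$, hence $f(x) \neq 0$. Therefore every complex root $\beta$ of $f$ satisfies $|\beta| > 1$. Next I would compute the Newton polygon of $f$ relative to $p$. The relevant data are $v_p(p^u) = u$, $v_p(a_q) = 0$ and $v_p(a_n) = 0$ (from $p \nmid a_n a_q$), together with $v_p(a_i) \ge 0$ for all $i$ and the vanishing of the coefficients of $x^1, \dots, x^{q-1}$. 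Consequently the lower convex hull of the points $(i, v_p(a_i))$ consists of the segment from $(0,u)$ to $(q,0)$, of horizontal length $q$, followed by the horizontal segment from $(q,0)$ to $(n,0)$. By the standard theory, in $\overline{\mathbb{Q}_p}$ the polynomial $f$ has exactly $q$ roots of valuation $u/q$ and $n-q$ roots of valuation $0$.

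Now I would combine the two inputs. In the factorization $f = gh$ the leading coefficients of $g$ and $h$ are $p$-units, since their product is $a_n$ and $p \nmid a_n$; hence $v_p(g(0))$ equals the sum of the $p$-adic valuations of the roots of $g$ in $\overline{\mathbb{Q}_p}$. If $g$ has exactly $k$ roots of valuation $u/q$, then $v_p(g(0)) = ku/q$. As this is a nonnegative integer and $\gcd(u,q) = 1$ (here the primality of $q$ together with $q \nmid u$ enters), one gets $q \mid k$, and since $0 \le k \le q$ this forces $k \in \{0, q\}$. Therefore $\{v_p(g(0)), v_p(h(0))\} = \{0, u\}$; after relabelling, $p \nmid h(0)$, and because $g(0)h(0) = p^u$ this yields $h(0) = \pm 1$. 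But if $\deg h \ge 1$, then $|h(0)|$ equals the absolute value of the leading coefficient of $h$ times the product of the absolute values of its roots, each exceeding $1$, so $|h(0)| > 1$, a contradiction. Hence $h$ is constant and $f$ is irreducible over $\mathbb{Q}$. The Conjecture is the resulting special case (with the index playing the role of $q$).

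I expect the main obstacle to be the Newton polygon step, specifically the verification that the coprimality $\gcd(u,q)=1$ rigidly pins the count $k$ to the extreme values $0$ and $q$: this is precisely where the hypothesis is indispensable, and it is what transfers the purely $p$-adic information into the arithmetic statement $h(0) = \pm 1$ that the archimedean bound then contradicts. A secondary subtlety, relevant to the more general theorem in which only $p \nmid a_m$ is assumed, is that dropping $p \nmid a_n$ allows a leading coefficient to be divisible by $p$, which breaks the clean identity $v_p(g(0)) = \sum v_p(\text{roots of } g)$ and would require a more careful distribution of the $p$-power across the two factors.
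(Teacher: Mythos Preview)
Your proof is correct and follows the same two-ingredient strategy as the paper: an archimedean estimate forcing every root strictly outside the unit circle, and a Newton polygon argument forcing one factor in any putative factorization to have constant term $\pm 1$, yielding the contradiction via the product of its roots.

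The only difference is in how the $p$-adic step is executed. You pass to root valuations in $\overline{\mathbb{Q}_p}$ and use that the leading coefficients of $g,h$ are $p$-units (from $p\nmid a_n$) to get $v_p(g(0))=ku/q$ with $k\in\{0,q\}$. The paper instead invokes Dumas' theorem directly on the edge vectors: since $\gcd(u,m)=1$, the single negative-slope edge from $(0,u)$ to $(m,0)$ is primitive and hence indivisible, so it cannot be shared between two factors each having a negative-slope edge. The payoff of the paper's formulation is precisely the point you flag at the end: it never uses $p\nmid a_n$, only $p\nmid a_m$, and this is what lets the paper prove the stronger Theorem (arbitrary $m$ with $\gcd(u,m)=1$ and merely $p\nmid a_m$), from which the Conjecture drops out as the special case $m=q$ prime. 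Your argument, as written, genuinely needs $p\nmid a_n$ and so proves the Conjecture as stated but not the strengthening.
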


In this note, we will show that Conjecture \ref{Conj} is true and the condition $p\nmid a_{n}a_{q}$ can be replaced with $p \nmid a_{q}$. For this purpose, we prove the following Theorem \ref{Thm}.

\begin{Thm} \label{Thm}
	Let $f(x) = a_{n}x^{n} + a_{n-1}x^{n-1} + \cdots + a_{m}x^{m} + p^{u} \in \mathbb{Z}[x]$, where $p$ is a prime number and $u \geqslant 1$. If $p \nmid a_{m}$, $\gcd(u, m) = 1$ and $p^{u} > |a_{n}| + |a_{n-1}| + \cdots + |a_{m}|$, then $f(x)$ is irreducible over $\mathbb{Q}$.
\end{Thm}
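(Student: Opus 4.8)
The plan is to use the theory of Newton polygons with respect to the prime $p$, which is the natural tool here given the hypotheses $p \nmid a_m$ and the dominance condition $p^u > \sum_{i=m}^n |a_i|$. First I would observe that $x^m$ divides all terms except the constant, so the lowest-degree term carrying a $p$-adic constraint is $a_m x^m$ with $p \nmid a_m$ (giving $p$-adic valuation $0$), while the constant term $p^u$ has valuation $u$. The Newton polygon of $f$ therefore has its right endpoint at $(m, 0)$ and its left endpoint at $(0, u)$, with no lattice points of the polygon lying strictly below the segment joining these, because every coefficient $a_i$ for $i > m$ could contribute but the edge from $(0,u)$ to $(m,0)$ has slope $-u/m$ and $\gcd(u,m)=1$ forces this edge to contain no interior lattice points. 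This is the crux: by Dumas' theorem the Newton polygon of a product is the ``sum'' (Minkowski-type concatenation) of the Newton polygons of the factors, and a single edge of slope $-u/m$ with $\gcd(u,m)=1$ cannot be split into two rational edges with integer endpoints.

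Next I would handle the genuine obstacle, which is that the Newton polygon only controls the factor of $f$ supported on the degrees $0$ through $m$; the high-degree part $a_n x^n + \cdots + a_{m+1}x^{m+1}$ sits to the right of $(m,0)$ and could in principle split off. The standard way to rule this out is the dominance hypothesis $p^u > |a_n| + \cdots + |a_m|$: I would argue that any potential factorization $f = g h$ over $\mathbb{Z}$ (Gauss's lemma lets us work over $\mathbb{Z}$) must, by the Newton-polygon analysis, force one factor to have all its roots of $p$-adic absolute value $> 1$ and the other to have roots of absolute value $\le 1$ in a way that contradicts the size bound. Concretely, the bound $p^u > \sum |a_i|$ guarantees (by a Rouch\'e-type or direct estimate on $|f(\alpha)|$) that $f$ has no roots $\alpha$ with $|\alpha| \ge 1$ in the complex sense contributing a small factor, pinning down the product of the moduli of the roots via the relation $|a_m|\cdot\prod|\alpha_i| = p^u$ over the roots.

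A cleaner route, and the one I would actually commit to, is to mimic the reduction used in Propositions \ref{Prop2}--\ref{Prop4}: suppose $f = gh$ is a nontrivial factorization with $g, h \in \mathbb{Z}[x]$. Because $p \nmid a_m$ and $f \equiv x^m(\text{unit} + \cdots) \pmod p$ has a very constrained reduction, and because $f(0) = p^u$, the constant terms $g(0)h(0) = p^u$ distribute the power $p^u$ between $g$ and $h$. The Newton-polygon edge of slope $-u/m$ with $\gcd(u,m)=1$ forces the segment over $[0,m]$ to belong \emph{entirely} to one factor — say $g$ carries the full constant-term valuation and $g$ has degree exactly $m$ in its ``lower'' part — so that the other factor $h$ has a unit constant term ($p \nmid h(0)$). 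I would then show $h$ must be constant (hence $\pm 1$) by the size estimate: evaluating the product of root moduli of $h$ against the coefficient bound $p^u > \sum|a_i|$ shows $h$ can have no roots at all of modulus $\ge 1$, while a reciprocal/leading-coefficient argument shows it has none of modulus $<1$ either.

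The main obstacle will be step two: ruling out a factorization in which the \emph{high-degree} block $a_n x^n + \cdots + a_{m+1} x^{m+1}$ peels off while the Newton polygon on $[0,m]$ stays intact. The Newton polygon alone does not see this, so the size condition $p^u > |a_n| + \cdots + |a_m|$ must be invoked precisely here, and I expect the delicate part to be converting that inequality into a statement that forbids a factor with all roots inside (or outside) the unit disk. I would prove a lemma of the form: \emph{if $p^u > \sum_{i=m}^n |a_i|$ and $p \nmid a_m$, then every complex root $\alpha$ of $f$ satisfies $|\alpha| > 1$}, via the estimate $|a_m \alpha^m + \cdots| \le (\sum_{i\ge m}|a_i|)\max(1,|\alpha|)^n < p^u = |f(0)|$ when $|\alpha|\le 1$, giving a contradiction; combined with the indivisibility of the slope-$u/m$ edge under Dumas' theorem, this pins every irreducible factor to carry part of that single edge, which is impossible unless $f$ is itself irreducible.
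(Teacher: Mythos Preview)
Your approach is the paper's: use Dumas on the indivisible edge from $(0,u)$ to $(m,0)$ to force one factor to have constant term $\pm 1$, prove every complex root of $f$ has modulus $>1$ from the inequality $p^u > \sum_{i\ge m}|a_i|$, and obtain a contradiction via Vieta. All the ingredients are present, but your final assembly in paragraphs 3 and 4 is garbled in two places.

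First, in paragraph 3 you write that the size estimate ``shows $h$ can have no roots at all of modulus $\ge 1$.'' This is backwards: your own lemma in paragraph 4 correctly shows that every root of $f$ (hence of $h$) has modulus strictly greater than $1$. The contradiction comes from the \emph{other} direction: since $h(0)=\pm 1$ and the leading coefficient of $h$ is a nonzero integer, Vieta gives $\prod_{\beta}|\beta| = |h(0)|/|\text{lead}(h)| \le 1$, so $h$ must have a root of modulus $\le 1$. That clash is exactly what the paper exploits.

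Second, the final sentence of paragraph 4 is simply false. Indivisibility of the slope-$(-u/m)$ edge under Dumas does \emph{not} ``pin every irreducible factor to carry part of that single edge'': Dumas happily assigns the entire negative-slope edge to one factor $g$ and only the non-negative-slope edges over $[m,n]$ to the other factor $h$. That scenario is precisely the one you must exclude, and the Newton polygon by itself cannot exclude it --- the Vieta/root-modulus step above is what finishes the job. Once you reverse the inequality in paragraph 3 and replace the erroneous last sentence by the Vieta argument, you have exactly the paper's proof.
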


Clearly, let $m = q$ be a prime number. Then $\gcd(q, m) = 1$ implies $q \nmid  u$. Thus, we get the following Corollary \ref{Cor}, which claim that Conjecture \ref{Conj} is true.

\begin{Cor} \label{Cor}
	Let $f(x) = a_{n}x^{n} + a_{n-1}x^{n-1} + \cdots + a_{q}x^{q} + p^{u} \in \mathbb{Z}[x]$, where $p$ and $q$ are two prime numbers and $u \geqslant 1$. If $p\nmid a_{q}$, $q \nmid u$ and $p^{u} > |a_{n}| + |a_{n-1}| + \cdots + |a_{q}|$, then $f(x)$ is irreducible over $\mathbb{Q}$.
\end{Cor}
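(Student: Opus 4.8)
The plan is to argue by contradiction, combining an archimedean size estimate for the roots of $f$ with a $p$-adic analysis of its Newton polygon via Dumas' Theorem. Suppose $f$ were reducible over $\mathbb{Q}$. By Gauss's Lemma I may then write $f = FG$ with $F, G \in \mathbb{Z}[x]$ and $\deg F, \deg G \geq 1$, so that in particular $F(0)G(0) = f(0) = p^{u}$.

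First I would extract an archimedean constraint on the constant terms of the factors. If $\alpha \in \mathbb{C}$ is any root of $f$, then $a_{m}\alpha^{m} + \cdots + a_{n}\alpha^{n} = -p^{u}$, and if $|\alpha| \leq 1$ the triangle inequality gives $p^{u} = |a_{m}\alpha^{m} + \cdots + a_{n}\alpha^{n}| \leq \sum_{i=m}^{n}|a_{i}||\alpha|^{i} \leq \sum_{i=m}^{n}|a_{i}| < p^{u}$, a contradiction; hence every root satisfies $|\alpha| > 1$. Writing $F(x) = F_{\ell}\prod_{j}(x-\beta_{j})$ with each $\beta_{j}$ a root of $f$, I get $|F(0)| = |F_{\ell}|\prod_{j}|\beta_{j}| > 1$, and likewise $|G(0)| > 1$. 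Since $F(0)G(0) = p^{u}$ with both factors integers of absolute value exceeding $1$, I conclude $p \mid F(0)$ and $p \mid G(0)$.

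Next I would read off the Newton polygon of $f$ with respect to $p$ from the points $(0,u)$, $(m,0)$ (which lies on the polygon because $p \nmid a_{m}$), and $(i, v_{p}(a_{i}))$ for $m < i \leq n$. Its leftmost edge joins $(0,u)$ to $(m,0)$ with slope $-u/m$; since $\gcd(u,m) = 1$ this slope is already in lowest terms and the edge carries no interior lattice point, while every remaining edge has non-negative slope. Thus $-u/m$ is the only negative slope occurring in the polygon of $f$. The decisive step then invokes Dumas' Theorem: the Newton polygon of $f$ is obtained by juxtaposing the edges of the polygons of $F$ and $G$ arranged by increasing slope, so every negative-slope edge of $F$ or of $G$ must have slope exactly $-u/m$, and the total horizontal length of such edges is $m$. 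An edge of slope $-u/m$ between lattice points has horizontal length divisible by $m$; as these lengths are non-negative multiples of $m$ summing to $m$, the entire steep edge belongs to a single factor, say $F$, and $G$ has no negative-slope edge at all.

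To finish, note that the Newton polygon of $G$ then has only non-negative slopes and is therefore non-decreasing; its left endpoint $(0, v_{p}(G(0)))$ has height at least $1$ because $p \mid G(0)$. On the other hand, since $p \nmid a_{m}$ the reduction of $f$ modulo $p$ contains the nonzero term $\overline{a_{m}}\,x^{m}$, so $\bar f \neq 0$ and hence $\bar G \neq 0$; this produces a coefficient of $G$ of valuation $0$, i.e.\ a polygon point at height $0$. A non-decreasing polygon starting at height at least $1$ cannot reach height $0$, a contradiction, and the theorem follows. The crux of the argument—and exactly the point where the hypothesis $p \nmid a_{n}$ of the earlier results becomes unnecessary—is this Dumas bookkeeping: because it controls the entire polygon of each factor rather than only the valuation of its constant term, the proof never needs information about the leading coefficient $a_{n}$, and the single condition $\gcd(u,m)=1$ already guarantees that the steep edge cannot be shared between $F$ and $G$. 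I expect the most delicate part of the write-up to be justifying cleanly that this steep edge is indivisible, since that is where the coprimality hypothesis does its essential work.
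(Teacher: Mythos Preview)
Your argument is correct and matches the paper's approach: the Corollary is deduced from Theorem~\ref{Thm} (taking $m=q$, so that $q\nmid u$ gives $\gcd(u,m)=1$), and your proof is essentially the paper's proof of that theorem, combining the same two ingredients---the archimedean bound forcing all roots outside the unit circle, and the Dumas/Newton-polygon fact that the single primitive edge from $(0,u)$ to $(m,0)$ cannot be split between two factors. The only cosmetic difference is the order: the paper first uses Dumas to force one factor to have constant term $\pm1$ and then invokes the root bound for a contradiction, whereas you first use the root bound to make $p$ divide both constant terms and then invoke Dumas; the content is identical.
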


 The proof of Theorem \ref{Thm} is presented in Section \ref{Sec3}, which relies on Newton polygon method. In Section \ref{Sec2}, we briefly recall a useful result on Newton polygon without more proof details.

\section{Newton polygon method} \label{Sec2}

Throughout this note, the notations and terminologies about Newton polygon method freely used but not defined here, we refer the reader to \cite{B2003, B2014, BBCM2013}. The following celebrated result of Dumas \cite{D1906}, which palys a central role in Newton polygon method.

\begin{Dumas}
	Let $p$ be a prime number and let $f(x) = g(x)h(x)$, where $f(x), g(x)$ and $h(x)$ are polynomials in $\mathbb{Z}[x]$. Then the system of vectors of the segments for the Newton polygon of $f(x)$ with respect to $p$ is the union of the systems of vectors of the segments for the Newton polygons of $g(x)$ and $h(x)$ with respect to $p$.
\end{Dumas}

 For applications of Newton polygon method in the study of the irreducibility for various classes of polynomials, like for instance Bessel polynomials and Laguerre polynomials, we refer the reader to the work of Filaseta \cite{F1995}, Filaseta, Finch and Leidy \cite{FFL2005}, Filaseta, Kidd and Trifonov \cite{FKT2012}, Filaseta and Lam \cite{FL2002}, Filaseta and Trifonov \cite{FT2002}, Filaseta and Williams \cite{FWJ2003}, Hajir \cite{H1995}, and Sell \cite{S2004}.

\section{Proof of Theorem \ref{Thm}} \label{Sec3}

	In this section, we start by proving the Lemma \ref{Lem}, which is useful for our proof.

\begin{Lem}[Remark 4, \cite{KR2022}] \label{Lem}
	Let $f(x) = a_{n}x^{n} + a_{n-1}x^{n-1} + \cdots + a_{1}x + a_{0} \in \mathbb{C}[x]$ be any polynomial of degree $n$ and $|a_{0}| > |a_{n}| + |a_{n-1}| + \cdots + |a_{1}|$. Then every root of $f(x)$ lies outside the unit circle.
\end{Lem}

\begin{proof}
	Let $z$ be a root of $f(x)$ with $|z| \leqslant 1$. Then $f(z) = 0$, that is,
	\begin{equation} \label{F1}
		-a_{0} = a_{n}z^{n} + a_{n-1}z^{n-1} + \cdots + a_{1}z.
	\end{equation}
	Taking modulus on both sides of \eqref{F1} and applying the absolute value inequality, we have
	\begin{equation*}
		\begin{aligned}
			|a_{0}|
			& = |a_{n}z^{n} + a_{n-1}z^{n-1} + \cdots + a_{1}z| \\
			& \leqslant |a_{n}||z|^{n} + |a_{n-1}||z|^{n-1} + \cdots + |a_{1}||z| \\
			& \leqslant |a_{n}| + |a_{n-1}| + \cdots + |a_{1}|,
		\end{aligned}
	\end{equation*}
	which contradicts the hypothesis.
	
	Therefore, all the roots of $f(x)$ lies outside the unit circle.
\end{proof}

 The following Lemma \ref{lem} is another key point  for our proof. More details on Lemma \ref{lem}, we refer the reader to see the proof of Lemma 1.4 in \cite{BBCM2013}.

\begin{Lem} \label{lem}
	Let $A(x_{1}, y_{1})$ and $B(x_{2}, y_{2})$ be two integral points in the plane. Then if $\gcd(|x_{1}-x_{2}|, |y_{1}-y_{2}|)=1$, then there no other integral points on the segment $AB$ expect $A$ and $B$.
\end{Lem}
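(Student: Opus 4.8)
The plan is to argue by contradiction. Suppose, for contradiction, that some integral point $P=(x,y)$ lies on the segment $AB$ but differs from both endpoints. Parametrising the segment, I write $P = A + t(B-A)$ with $0<t<1$; in coordinates this reads $x-x_{1}=t(x_{2}-x_{1})$ and $y-y_{1}=t(y_{2}-y_{1})$. Setting $d_{x}=x_{2}-x_{1}$ and $d_{y}=y_{2}-y_{1}$, the whole argument then hinges on extracting a contradiction from the fact that $x-x_{1}$ and $y-y_{1}$ are integers while $t\in(0,1)$ cannot be one.

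The key step is to show that the coprimality hypothesis forces $t$ to be an integer. Since $\gcd(|d_{x}|,|d_{y}|)=1$ is the same as $\gcd(d_{x},d_{y})=1$, B\'ezout's identity supplies integers $u,v$ with $u d_{x}+v d_{y}=1$. Multiplying the two parametrisation equations by $u$ and $v$ respectively and adding, I obtain
\[
u(x-x_{1})+v(y-y_{1}) = t\,(u d_{x}+v d_{y}) = t .
\]
The left-hand side is an integer, so $t\in\mathbb{Z}$, contradicting $0<t<1$; hence no such interior point $P$ exists. An equivalent route bypasses B\'ezout: writing $t=a/b$ in lowest terms, the integrality of $t d_{x}$ and $t d_{y}$ forces $b\mid d_{x}$ and $b\mid d_{y}$, whence $b\mid\gcd(d_{x},d_{y})=1$ and $t$ is again an integer.

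I do not expect any serious obstacle here: the statement is the standard fact that the primitive vector $B-A$ admits no lattice point strictly between its tail and its head, and it drops straight out of B\'ezout's identity. The only points needing a little care are the degenerate and boundary cases. If $A=B$ then $d_{x}=d_{y}=0$ and the gcd is $0\neq1$, so the hypothesis already rules this out; and if exactly one of $d_{x},d_{y}$ vanishes, coprimality forces the other to equal $\pm1$, in which case the B\'ezout step still applies verbatim. Retaining the strict inequality $0<t<1$ rather than $0\le t\le1$ is precisely what excludes the endpoints $A$ and $B$ themselves, and with that the proof is complete.
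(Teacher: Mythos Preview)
Your proof is correct. The parametrisation $P=A+t(B-A)$ together with B\'ezout's identity cleanly forces $t\in\mathbb{Z}$, contradicting $0<t<1$; the alternative via $t=a/b$ in lowest terms is equally valid, and your handling of the degenerate cases is careful.

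As for comparison with the paper: the paper does not actually prove this lemma. It merely states the result and refers the reader to the proof of Lemma~1.4 in \cite{BBCM2013}. Your argument is therefore strictly more than what the paper supplies, giving a self-contained elementary proof where the paper outsources it.
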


Now, we present the proof of our main result.

\begin{proof}[Proof of Theorem \ref{Thm}]
	Suppose that $f(x) = f_{1}(x)f_{2}(x)$ is a non-trival factorization of $f(x)$. We say
	\begin{equation*}
		f_{1}(x) = b_{s}x^{s} + b_{s-1}x^{s-1} + \cdots + b_{0} \in \mathbb{Z}[x],
	\end{equation*}
	\begin{equation*}
		f_{2}(x) = c_{t}x^{t} + c_{t-1}x^{t-1} + \cdots + c_{0} \in \mathbb{Z}[x],
	\end{equation*}
where $b_{s}c_{t}\neq 0$, $s \geqslant 1$, $t\geqslant 1$ and $s + t = n$.

Since $f(x) = f_{1}(x)f_{2}(x)$, comparing the constant term on both sides, we get
	\begin{equation*}
		p^{u} = b_{0}c_{0}.
	\end{equation*}
Without loss of generality, we may suppose that $|b_{0}| = p^{\alpha}$, $|c_{0}| = p^{u-\alpha}$, where $0 \leqslant \alpha \leqslant \dfrac{u}{2}$. Now we claim that $\alpha = 0$. Assume that $0 < \alpha \leqslant \dfrac{u}{2}$. Then $p \mid b_{0}$ and $p \mid c_{0}$. Since $p \nmid a_{m}$, there exsit $b_{i_{0}}$ and $c_{j_{0}}$ such that $p \nmid b_{i_{0}}c_{j_{0}}$, where $1 \leqslant i_{0} \leqslant s$ and $1\leqslant j_{0} \leqslant t$. Thus, the Newton polygon of $f_{1}(x)$ with respect to $p$ contains at least one segment with a negative slope. Similarly, the same is true for $f_{2}(x)$. However, since $p \nmid a_{m}$, the first two points in the Newton polygon of $f(x)$ with respect to $p$ are $A(0, u)$ and $B(m, 0)$. By $\gcd(u, m) = 1$ and Lemma \ref{lem}, there no other integral points on the segment $AB$ expect $A$ and $B$. Thus, the Newton polygon of $f(x)$ with respect to $p$ exactly contains one segment with a negative slope. This contradicts with Dumas' Theorem. Therefore, we get $\alpha = 0$ and $|b_{0}| = 1$.

Suppose that $z_{1}, z_{2}, \cdots, z_{s} \in \mathbb{C}$ are all the roots of $f_{1}(x)$, which are also the roots of $f(x)$. Applying the Vieta Theorem on $f_{1}(x)$, we have
	\begin{equation*}
		\prod_{i=1}^{t}|z_{i}| = \dfrac{|b_{0}|}{|b_{t}|} \leqslant 1,
	\end{equation*}
which means that $f(x)$ has a root within the unit closed circle.

On the other hand, since $p^{u} > |a_{n}| + |a_{n-1}| + \cdots + |a_{m}|$, by Lemma \ref{Lem}, all the roots of $f(x)$ lies outside the unit circle. This is a contradiction.

Therefore, $f(x)$ is irreducible over $\mathbb{Q}$.
\end{proof}

\normalsize


\begin{thebibliography}{99}
	
\bibitem{B2003} E. J. Barbeau, \emph{Polynomials}, Problem Books in Mathematics(Springer-Verlag, New York, 2003).

\bibitem{B2014} N. C. Bonciocat, \emph{Sch\"{o}nemann-Eisenstein-Dumas-type irreducibility conditions that use arbitrarily many prime numbers}, Comm. Algebra \textbf{43} (2015), no. 8, 3102--3122.

\bibitem{BB2006} A.I. Bonciocat and N.C. Bonciocat, \emph{Some classes of irreducible polynomials}, Acta Arith.,  \textbf{123}(2006), 349--360.

\bibitem{BB2009} A.I. Bonciocat and N.C. Bonciocat, \emph{On the irreducibility of polynomials with leading coefficient divisible by a large prime power}, Amer. Math. Monthly,  \textbf{116}(8)(2009), 743--745.


\bibitem{BBCM2013} N.C. Bonciocat, Y. Bugeaud, M. Cipu and M. Mignotte, \emph{Irreducibility criteria for sums of two relatively prime polynomials}, Int. J. Number Theory \textbf{9}(6) (2013), 1529--1539.

\bibitem{D1906} G. Dumas, \emph{Sur quelques cas d'irreductibilit\'{e} des polyn\^{o}mes \'{a} coefficients rationnels}, Journal de Math. Pure et Appl. \textbf{2} (1906), 191--258.

\bibitem{F1995} M. Filaseta, \emph{The irreducibility of all but finitely many Bessel polynomials}, Acta Math. \textbf{174} (1995) 383--397.

\bibitem{FFL2005}  M. Filaseta, C. Finch and J. R. Leidy, \emph{T.N. Shorey's influence in the theory of irreducible polynomials}, in Diophantine Equations, ed. N. Saradha (Narosa Publishing House, New Delhi, 2005), pp. 77--102.

\bibitem{FKT2012} M. Filaseta, T. Kidd and O. Trifonov, \emph{Laguerre polynomials with Galois group $A_{m}$ for each $m$}, J. Number Theory \textbf{132} (2012) 776--805.

\bibitem{FWJ2003} M. Filaseta and R. L. Williams, Jr., \emph{On the irreducibility of a certain class of Laguerre polynomials}, J. Number Theory \textbf{100} (2003) 229--250.

\bibitem{FL2002} M. Filaseta and T. Y. Lam, \emph{On the irreducibility of the generalized Laguerre polynomials}, Acta Arith. \textbf{105} (2002) 177--182.

\bibitem{FT2002} M. Filaseta and O. Trifonov, \emph{The irreducibility of the Bessel polynomials}, J. Reine
Angew. Math. \textbf{550} (2002) 125--140.

\bibitem{H1995} F. Hajir, \emph{Some An-extensions obtained from generalized Laguerre polynomials}, J. Number Theory \textbf{50} (1995) 206--212.

\bibitem{J1967} A. T. Jonassen, \emph{On the irreducibility of the trinomials $x^{n}\pm x^{m} \pm 4$}, Math. Scand.,  \textbf{21} (1967), 177--189.

\bibitem{KR2022} B. Koley and A. S. Reddy, \emph{An irreducible class of polynomials over integers}, J. Ramanujan Math. Soc.,  \textbf{37}(2022), no. 4, 319--330.

\bibitem{KR2023} B. Koley and A. S. Reddy, \emph{An irreducibility criterion of polynomials over integers}, Bulletin
math\'{e}matique de la Soci\'{e}t\'{e} des Sciences Math\'{e}matiques de Roumanie, to appear.

\bibitem{PS1985} L. Panitopol, D. Stef\"{a}nescu, \emph{Some criteria for irreducibility of polynomials}, Bull. Math. Soc. Sci. Math. R. S. Roumanie (N. S.), \textbf{29} (1985), 69--74.

\bibitem{S2004} E. A. Sell, \emph{On a certain family of generalized Laguerre polynomials}, J. Number Theory
\textbf{107} (2004) 266--281.

\bibitem{W1934} L. Weisner, \emph{Criteria for the irreducibility of polynomials}, Bull. Amer. Math. Soc.,  \textbf{40} (1934), 864--870.
\end{thebibliography}
\end{document}